\theoremstyle{plain}
\newtheorem{thm}{Theorem}[section]
\newtheorem{cor}[thm]{Corollary}
\newtheorem{prob}[thm]{Problem}
\newtheorem{lem}[thm]{Lemma}
\newtheorem{rem}[thm]{Remark}
\newtheorem{exam}[thm]{Example}
\def\cal{\mathcal}
\def\bbb{\mathbb}
\def\op{\operatorname}
\renewcommand{\phi}{\varphi}
\newcommand{\N}{\bbb{N}}
\newcommand{\Z}{\bbb{Z}}
\newcommand{\Q}{\bbb{Q}}
\newcommand{\C}{\bbb{C}}
\let\@@pmod\pmod
\DeclareRobustCommand{\pmod}{\@ifstar\@pmods\@@pmod}
\def\@pmods#1{\mkern4mu({\operator@font mod}\mkern 6mu#1)}
\begin{document}

\title[Diagonal quintic threefolds with infinitely many rational points]{Construction of diagonal quintic threefolds with infinitely many rational points}
\author{Maciej Ulas}

\keywords{diagonal quintic threefolds, polynomial solutions, quadratic forms, Gr\"{o}bner basis} \subjclass[2020]{11D41, 13P15}

\begin{abstract} In this note we present a construction of an infinite family of diagonal quintic threefolds defined over $\Q$ each containing infinitely many rational points. As an application, we prove that there are infinitely many quadruples $B=(B_{0}, B_{1}, B_{2}, B_{3})$ of co-prime integers such that for a suitable chosen integer $b$ (depending on $B$), the equation $B_{0}X_{0}^5+B_{1}X_{1}^5+B_{2}X_{2}^5+B_{3}X_{3}^{5}=b$ has infinitely many positive integer solutions.
\end{abstract}

\maketitle

\section{Introduction}\label{sec1}
Let $A=(A_{0}, A_{1}, A_{2}, A_{3}, A_{4})\in \Z^{5}$ be given and assume that $\gcd(A)=1$. We consider the diagonal quintic form in five variables given by
$$
F_{A}(X_{0}, X_{1}, X_{2}, X_{3}, X_{4})=\sum_{i=0}^{4}A_{i}X_{i}^5,
$$
and a related diagonal quintic threefold defined as
$$
\cal{V}_{A}:\;F_{A}(X_{0}, X_{1}, X_{2}, X_{3}, X_{4})=0.
$$
The set of integer points on the variety $\cal{V}_{A}$ is denoted by $\cal{V}_{A}(\Z)$. We say that a point $(X_{0}, X_{1}, X_{2}, X_{3}, X_{4})\in\cal{V}_{A}(\Z)$ is non-trivial if the greatest common divisor of the entries $X_{0}, X_{1}, X_{2}, X_{3}, X_{4}$ is equal to 1, and no proper sub-sum in the expression $\sum_{i=0}^{4}A_{i}X_{i}^5$ vanish. In the sequel, by an integer point on $\cal{V}_{A}$ we will mean a non-trivial one.

The question concerning the existence of integer points, or equivalently rational points, on $\cal{V}_{A}$ is very difficult. In particular, if $I=(1,1,1,1,1)$, the variety $\cal{V}_{I}$ is called the Fermat quintic and it is an open question whether $\cal{V}_{A}(\Z)$ is infinite. Euler conjectured that there are no non-trivial points in $\cal{V}_{I}(\Z)$. However, in 1966, Lander and Parkin in \cite{LP} found that $(27, 84, 110, 133, -144)\in\cal{V}_{A}$, that is the following equality holds
$$
27^5 + 84^5 + 110^5 + 133^5 = 144^5.
$$
The computer search was extended by others and two additional points were find on $\cal{V}_{I}$. In 1997 Scher and Seidl found the point $(5027, 6237, 14068, -220, -14132)$. The other one $(55, 3183, 28969, 85282, -85359)$ was found by Frey. In fact, according to our best knowledge, there is no known example of a vector $A\in\Z^{5}$ such that $\cal{V}_{A}$ contains infinitely many non-trivial integer points and this motivated us to work on this problem. Note that $\cal{V}_{A}\simeq \cal{V}_{I}$ over $\C$. The geometry of $\cal{V}_{I}$ was investigated in many papers. In particular, the geometry of lines on $\cal{V}_{I}$ was investigated by Albano and Katz \cite{AK}. The question concerning the existence of (complex) conics on $\cal{V}_{I}$ was the subject of PhD dissertation of Xu \cite{X}, some experimental work of Testa  \cite{HBT} and recent work of A. M. Musta\c{t}\v{a} \cite{M}. In this context let us recall that a generic quintic threefold contains 2875 lines and 609,250 conics. However, the findings of the mentioned authors do not shed light on the question concerning non-emptiness of $\cal{V}_{A}(\Z)$.

Let us describe the content of the paper in some details. In Section \ref{sec2} we quickly eliminate the possibility of the existence of $A\in\Z^{5}$ with nonzero entries, such that the variety $\cal{V}_{A}$ contains a line defined over $\Q$ giving infinitely many nontrivial rational points. In Section \ref{sec3} we present construction of a quadratic parametric solution of the equation defining the variety $\cal{V}_{A}$ for suitable chosen values of $A$. As an immediate application we get lower bound for the number of those $A\in\Z^{5}$ satisfying $\gcd(A)=1$ and such that $\cal{V}_{A}$ has infinitely many integer points. More precisely, the number $\cal{P}(N)$, of such $A\in\Z^{5}$ with $\op{max}(|A|)\leq N$ and $\gcd(A)=1$, satisfies $\cal{P}(N)\gg N^{1/10}$. Finally, in the last section we prove that there are infinitely many quadruples $B=(B_{0}, B_{1}, B_{2}, B_{3})$ of co-prime integers such that for a suitable chosen integer $b$ (depending on $B$), the equation $B_{0}X_{0}^5+B_{1}X_{1}^5+B_{2}X_{2}^5+B_{3}X_{3}^{5}=b$ has infinitely many positive integer solutions.

\bigskip

In our work we extensively used symbolic calculations, especially Gr\"{o}bner bases computations. All Gr\"{o}bner bases were computed with the help of Mathematica 13.2 computational package \cite{Wol} running on a standard machine with i7 type processor and 32 GB of RAM.

\section{Warm up}\label{sec2}

The first thing which comes to mind is to find linear polynomials and an appropriate $A\in\Z^{5}$ such that on $\cal{V}_{A}$ there is a line defined over $\Q$. In other words, for variables $u, v$, we take $X_{i}=a_{i}u+b_{i}v$ for $i=0,\ldots, 4$ and look for rational numbers $a_{i}, b_{i}$ such that
$$
\sum_{i=0}^{4}A_{i}(a_{i}u+b_{i}v)^{5}=0
$$
in the ring $\Q[u, v]$. In other words, we have a line lying on $\cal{V}_{A}$. Because we are interested in the existence of nontrivial points, there are $i, j\in\{0, 1, 2, 3, 4\}$ such that $i\neq j$ and linear forms $a_{i}u+b_{i}v, a_{j}u+b_{j}v$ are not proportional. Thus, after linear change of variables, without loss of generality, we can assume that $i=0, j=1$, and $a_{0}=1, b_{0}=0, a_{1}=0, b_{1}=1$. Let $C_{i}$ be the coefficient of $u^{i}v^{5-i}$ in the expansion of $\sum_{i=0}^{4}A_{i}(a_{i}u+b_{i}v)^{5}$ and consider the ideal $J$ generated by the polynomials $C_{i}, i=0, 1, \ldots, 5$. Moreover, let $\op{Gb}(J)$ be a Gr\"{o}bner basis of the ideal $J$. Then, we have that
$$
\op{Gb}(J)\cap \Q[a_4,b_4,A_0,A_1,A_2,A_3,A_4]=\{a_4^2b_4^3 A_0 A_1 A_4,a_4^3b_4^2 A_0 A_1 A_4\},
$$
and quick computation reveals that any solution of the system $C_{i}=0, i=0, \ldots, 5$, leads to lines which generate only trivial points on $\cal{V}_{A}$. Let us note that essentially the same result can be obtained by using \cite[Proposition 1.3]{AK}, which says that if $L$ is a line contained in $\cal{V}_{A}, A=(1, 1, 1, 1, 1)$, then $L$
goes through a point with 3 coordinates equal to 0.

In any way, the failure of the above approach suggests that if we want to construct parametric solution of the equation defining $\cal{V}_{A}$, or equivalently, a rational curve lying on $\cal{V}_{A}$, we need to look for parametric solutions given by polynomials of degrees $>1$.

\section{The construction of a quadratic parametrization}\label{sec3}

In this section we will find one parameter family of 5-tuples $A$ such that $\cal{V}_{A}$ contains the curve defined by quadratic forms. It is tempting to work with the general form of such a curve, i.e., try to find values $A\in\Z_{5}$ and $a_{i}, b_{i}, c_{i}\in\Q$ such that $X_{i}(u, v)=a_{i}u^2+b_{i}uv+c_{i}v^2, i=0, \ldots, 4$, satisfy the identity
$$
0=\sum_{i=0}^{4}A_{i}(a_{i}u^2+b_{i}uv+c_{i}v^2)^{5}=\sum_{j=0}^{10}D_{j}u^{10-j}v^{j},
$$
in the ring $\Q[u, v]$. Treating $a_{i}, b_{i}, c_{i}, A_{i}$ for $i=0, \ldots, 4,$ as independent variables, we have that $D_{j}\in\Z[A_{0}, \ldots, A_{4}, a_{0}, \ldots, c_{4}]$. Thus, we are interested in finding rational solutions of the system
$$
\cal{S}:\;D_{j}=0, \quad j=0, 1, \ldots, 10.
$$
To be more specific note that we are playing with a system involving 20 variables and 11 homogeneous polynomials. A naive count $20-11-1=8$ suggests that the dimension of the variety defined by the system $\cal{S}$ is 8 and the expectation that $\cal{S}$ has solutions in rational numbers is reasonable. However, the resulting system is too complicated to be fully investigated. Although we were trying quite hard, we were unable to solve $\cal{S}$ without any additional assumptions on the shape of the form $X_{i}(u, v)$. Thus, instead of working with the general quadratic forms, we decided to work with forms
\begin{align*}
&X_{0}=s_{0}(u, v)=uv,\\
&X_{1}=s_{1}(u, v)=u^2-auv+v^2,\\
&X_{2}=s_{2}(u, v)=u^2+auv+v^2,\\
&X_{3}=s_{3}(u, v)=u^2+buv+cv^2,\\
&X_{4}=s_{4}(u, v)=u^2+duv+ev^2,
\end{align*}
where $a, b, c, d, e\in\Q$ need to be determined. Thus, let $C_{i}$ be the $i$-th coefficient in the expansion
$$
\sum_{i=0}^{4}A_{i}s_{i}(u, v)^5=\sum_{i=0}^{10}C_{i}u^{i}v^{10-i}.
$$
We are interested in the rational solutions of the system
$$
\cal{S}':\;C_{i}=0, \quad i=0, 1,\ldots, 10,
$$
and, without loss of generality, we can assume that $A_{4}=1$.

Let us observe that the variable $A_{0}$ appears only in one polynomial, i.e., in $C_{5}$, and with the exponent 1. That means that it is enough to find solutions of the system $C_{i}=0$ for $i\in\{0, \ldots, 10\}\setminus\{5\}$, and then compute the value of $A_{0}$ from $C_{5}=0$. We are thus working with the polynomials lying in the ring $\Q[A_{1}, A_{2}, A_{3}, a, b, c, d, e]$.

Let $I=<C_{0},\ldots,C_{4}, C_{6},\ldots, C_{10}>$ denote the ideal generated by the polynomials $C_{i}$ for $i\in\{0, 1, \ldots, 10\}\setminus\{5\}$. One can compute the Gr\"{o}bner basis $\op{Gb}(I)$ of the ideal $I$. It contains 32 polynomials. The computation of the full basis took around 13 minutes. However, for our purposes it is better to work with the basis containing polynomials in variables $A_{3}, e$ only.  We have that
$$
\op{Gb}(I)\cap \Z[A_{3},e]=\{F_{1}, F_{2}, F_{3}\},
$$
where
\begin{equation*}
F_{1}=\left(A_3+1\right) (e-1)^6 \left(e^2+e+1\right) \left(3 e^2+2 e+3\right).
\end{equation*}
Because $A_{3}, e$ need to be rational we have that $(A_{3}+1)(e-1)=0$. First, we consider the case $A_{3}+1=0$. Adding the polynomial $A_{3}+1$ to the ideal $I$ we get the ideal $I'$. The Gr\"{o}bner basis $\op{Gb}(I')$ of the ideal $I'$ takes the form:
\begin{align*}
\op{Gb}(I')=\{ & c-e, (b-d)(e^4-1) (b-d), (b-d)(e^2-1)(d^2+2e), b^2-d^2,\\
               &(b-d)(a^2-d^2-2 e+2),A_3+1, (b-d)(e^2-1)(a d-2A_2),\\
               &(b-d)(ad+A_2 d^2+2eA_2-2 A_2), 2 a A_2-b+d, A_1+A_2\}.
\end{align*}
It is clear that the necessary condition for the existence of rational solutions of the system $\cal{S}'$ is the existence of rational solutions of the system, say $\cal{S}''$, defined by the vanishing of each entry from $\op{Gb}(I')$. Simple computation with the Mathematica procedure {\tt Solve} reveals that $\cal{S}''$ has only one, up to sign changes and permutations, solution which may be useful. This solution has the following form
\begin{equation*}
\begin{array}{llll}
  a=-\sqrt{d^2-4}, & b=-d,                        & c=-1,        & e=-1, \\
  A_{0}=64d(d^2-2),   & A_{1}=\frac{d}{\sqrt{d^2-4}},& A_{2}=-A_{1},& A_{3}=-1.
\end{array}
\end{equation*}
There are also other solutions but they are defined over nontrivial algebraic extensions of $\Q$ or lead to non interesting equalities $s_{1}(u, v)=s_{2}(u, v), s_{3}(u, v)=s_{4}(u, v)$. To make the expression $a=-\sqrt{d^2-4}$ rational one can use the substitution $d=(-1 - t^2)/t$, where $t$ is a rational parameter. Thus, after necessary simplifications we obtain $A(t)=(A_{0}(t), A_{1}(t), A_{2}(t), A_{3}(t), A_{4}(t))$, where
$$
A_{0}(t)=64t^2(1-t^8), \quad A_{1}(t)=-A_{2}(t)=t^2+1,\quad A_{3}(t)=-A_{4}(t)=t^2-1
$$
and note that on the quintic threefold $\cal{V}_{A(t)}$ there is a rational curve defined by the quadratic forms
\begin{equation}\label{svalues}
\begin{cases}
\begin{array}{lll}
  X_{0} & = & s_{0}(u,v)=uv,\\
  X_{1} & = & s_{1}(u,v)=tu^2+(t^2-1)uv+tv^2,\\
  X_{2} & = & s_{2}(u,v)=tu^2-(t^2-1)uv + tv^2,\\
  X_{3} & = & s_{3}(u,v)=tu^2-(t^2+1)uv-tv^2,\\
  X_{4} & = & s_{4}(u,v)=tu^2+(t^2+1)uv-tv^2.
\end{array}
\end{cases}
\end{equation}

%\begin{align*}
%&X_{0}=s_{0}(u,v)=uv,\\
%&X_{1}=s_{1}(u,v)=tu^2+(t^2-1)uv+tv^2,\\
%&X_{2}=s_{2}(u,v)=tu^2-(t^2-1)uv + tv^2,\\
%&X_{3}=s_{3}(u,v)=tu^2-(t^2+1)uv-tv^2,\\
%&X_{4}=s_{4}(u,v)=tu^2+(t^2+1)uv-tv^2.
%\end{align*}

We are left with the case $e-1=0$. We apply exactly the same procedure as in the case $A_{3}+1=0$ and find that there are no solutions which lead to nontrivial identities. Summing up, we thus proved

\begin{thm}\label{family}
Let $t\in\Z\setminus\{-1, 0, 1\}$ and put $A(t)=(A_{0}(t), \ldots, A_{4}(t))$. The variety $\cal{V}_{A(t)}$ contains infinitely many non-trivial integer points.
%$$
%F_{t}(X_{0},\ldots, X_{4})=64t^2(1-t^8)X_{0}^{5}-(t^2+1)(X_{1}^5-X_{2}^5)-(t^2-1)(X_{3}^5-X_{4}^5)
%$$
\end{thm}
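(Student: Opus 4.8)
The plan is to exploit directly the quadratic parametrization $(s_0,\dots,s_4)$ recorded in \eqref{svalues}. The forms $A_i(t)$ were manufactured precisely so that the identity $\sum_{i=0}^{4}A_{i}(t)s_{i}(u,v)^{5}=0$ holds in $\Q[t,u,v]$, and one may reconfirm this by a direct expansion. Granting the identity, for every $(u,v)\in\Z^{2}$ the point $P(u,v)=(s_{0}(u,v),\dots,s_{4}(u,v))$ automatically lies in $\cal{V}_{A(t)}(\Z)$. So the entire burden is to show that, as $(u,v)$ ranges over $\Z^{2}$, these points yield infinitely many \emph{non-trivial} integer points in the sense of Section \ref{sec1}. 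This is the real content: as the warm-up of Section \ref{sec2} illustrates, a parametrization can produce an abundance of integer points that are all worthless because they are trivial.

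First I would prove that we obtain infinitely many \emph{distinct} points. Consider the morphism $\phi\colon\bbb{P}^{1}\to\bbb{P}^{4}$, $\phi([u:v])=[s_{0}(u,v):\dots:s_{4}(u,v)]$. Since $s_{0}=uv$ has no $u^{2}$-term while $s_{1}=tu^{2}+(t^{2}-1)uv+tv^{2}$ does, the quadratics $s_{0}$ and $s_{1}$ are linearly independent, so $\phi$ is non-constant. A non-constant morphism out of $\bbb{P}^{1}$ has finite fibres, hence the infinitely many points of $\bbb{P}^{1}(\Q)$ are carried to infinitely many distinct rational points of $\cal{V}_{A(t)}$. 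Clearing denominators and dividing by the $\gcd$ turns each such projective point into a primitive integer point with $\gcd$ of its entries equal to $1$, which disposes of the first half of the non-triviality requirement.

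The crux is ruling out vanishing proper sub-sums. For a nonempty proper $S\subsetneq\{0,1,2,3,4\}$ put $P_{S}(u,v)=\sum_{i\in S}A_{i}(t)s_{i}(u,v)^{5}$, a binary form of degree $10$. If $P_{S}\not\equiv 0$, it vanishes at only finitely many $[u:v]\in\bbb{P}^{1}(\Q)$; discarding the finitely many bad ratios contributed by all $30$ proper subsets still leaves infinitely many admissible $(u,v)$. It therefore suffices to check that no $P_{S}$ vanishes identically. Here the identity $\sum_{i=0}^{4}A_{i}s_{i}^{5}=0$ gives $P_{S}=-P_{S^{c}}$, so $P_{S}\equiv 0$ iff $P_{S^{c}}\equiv 0$; this reduces the verification to $|S|\in\{1,2\}$. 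For $|S|=1$ one would need some $A_{i}(t)=0$, which is impossible for $t\in\Z\setminus\{-1,0,1\}$, since then $A_{0}=64t^{2}(1-t^{8})\neq 0$, $A_{1}=t^{2}+1\neq 0$ and $A_{3}=t^{2}-1\neq 0$ (and likewise $A_2,A_4$). For $|S|=2$, the relation $A_{i}s_{i}^{5}=-A_{j}s_{j}^{5}$ forces $(s_{i}/s_{j})^{5}$ to be a constant in $\Q(t)$, hence $s_{i}$ and $s_{j}$ proportional; but a direct inspection of \eqref{svalues} shows no two of the forms are proportional for $t\notin\{-1,0,1\}$ ($s_{0}$ has no square terms, while any pair among $s_{1},\dots,s_{4}$ differs in the sign of a $uv$- or $v^{2}$-coefficient).

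Combining the three ingredients, choosing $(u,v)\in\Z^{2}$ outside the finitely many excluded ratios produces infinitely many distinct primitive integer points with no vanishing proper sub-sum, that is, infinitely many non-trivial integer points on $\cal{V}_{A(t)}$. The step I expect to demand the most care is this sub-sum analysis — exactly the non-triviality condition — because that is precisely where parametric constructions tend to collapse back to trivial points; the observation $P_{S}=-P_{S^{c}}$, reducing everything to $|S|\le 2$, is what keeps it under control.
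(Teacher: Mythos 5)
Your proposal is correct and follows essentially the same route as the paper: the theorem there is deduced directly from the quadratic parametrization \eqref{svalues} and the identity $\sum_{i=0}^{4}A_{i}(t)s_{i}(u,v)^{5}=0$, which is exactly your starting point. The only difference is that you make explicit the verifications the paper leaves implicit — non-constancy of the map $[u:v]\mapsto[s_{0}:\dots:s_{4}]$ giving infinitely many distinct primitive points, and the reduction via $P_{S}=-P_{S^{c}}$ showing no proper sub-sum vanishes identically — and these checks are sound.
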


\begin{rem}
{\rm In this context one can recall that there are quadratic parametrizations living on the diagonal surface $X_{0}^k+X_{1}^k=X_{2}^k+X_{3}^k$, for $k=5, 6, 7$. However, these parametrizations are defined over non-trivial algebraic extension of $\Q$ (see the discussion in \cite{Elk1} and the work of Reznick concerning linear relations between powers of binary quadratic forms \cite{R}).

}
\end{rem}
We define the following counting function
$$
\cal{P}(N)=\#\{A=(A_{0},\ldots, A_{4}):\;\op{max}\{|A_{i}|\}\leq N, \gcd(A)=1,\;\mbox{and}\;\cal{V}_{A}(\Z)\;\mbox{is infinite}\}.
$$
Our findings allow us to deduce the following

\begin{thm}\label{count}
We have $\cal{P}(N)\gg N^{1/10}$.
\end{thm}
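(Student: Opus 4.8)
The plan is to produce $\gg N^{1/10}$ distinct primitive vectors $A$ with $\max|A_i|\le N$ and $\cal{V}_A(\Z)$ infinite, by specializing the one-parameter family $A(t)$ furnished by Theorem~\ref{family}. Since the variety $\cal{V}_A$ depends on $A$ only up to a nonzero scalar, each integer $t\notin\{-1,0,1\}$ gives, after dividing $A(t)$ by $g(t):=\gcd(A_0(t),\ldots,A_4(t))$, a primitive integer vector whose associated variety coincides with $\cal{V}_{A(t)}$ and hence has infinitely many non-trivial integer points. So it suffices to count how many such primitive vectors fit inside the box $\max|A_i|\le N$ and to verify that distinct parameters produce distinct vectors.

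First I would estimate the height of $A(t)$. Since $A_0(t)=64t^2(1-t^8)$ while $A_1(t)=-A_2(t)=t^2+1$ and $A_3(t)=-A_4(t)=t^2-1$, for every integer $t\ge 2$ the first coordinate dominates and
$$
\max_{i}|A_i(t)|=|A_0(t)|=64\,t^2(t^8-1)\le 64\,t^{10}.
$$
Consequently, for every $t$ in the range $2\le t\le (N/64)^{1/10}$ we have $\max_i|A_i(t)|\le N$, and dividing by $g(t)\ge 1$ can only decrease the coordinates, so the primitive vector $A(t)/g(t)$ still lies in the box. The number of integers $t$ in this range is $\gg N^{1/10}$.

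The remaining point, and the only one requiring a short argument, is injectivity: distinct parameters must yield distinct primitive vectors. Suppose $A(t_1)$ and $A(t_2)$ are proportional, say $A(t_1)=\lambda A(t_2)$ with $\lambda\in\Q^{\times}$. Comparing the $A_1$- and $A_3$-coordinates gives $t_1^2+1=\lambda(t_2^2+1)$ and $t_1^2-1=\lambda(t_2^2-1)$; subtracting forces $\lambda=1$, and then $t_1^2=t_2^2$, i.e.\ $t_1=\pm t_2$. Hence on the range $2\le t\le (N/64)^{1/10}$ of positive integers the map $t\mapsto A(t)/g(t)$ is injective, because an equality of primitive representatives would make the original vectors $A(t_i)$ proportional. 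Combining this with the height estimate yields at least $\gg N^{1/10}$ distinct admissible $A$, which is the desired bound. I expect no serious obstacle here; the only things to be careful about are confirming that $A_0(t)$ is genuinely the largest coordinate, so that the box constraint is governed by the exponent $10$, and that passing to the primitive representative does not collapse distinct parameters — both of which the coordinates $A_1(t)$ and $A_3(t)$ settle at once.
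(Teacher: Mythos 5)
Your proposal is correct and follows essentially the same route as the paper: specialize the family $A(t)$ from Theorem~\ref{family}, observe that $\max_i|A_i(t)|=64t^2(t^8-1)$ so the box constraint $\max_i|A_i|\le N$ admits $\gg N^{1/10}$ values of $t$, and handle primitivity (the paper restricts to even $t$, where $\gcd(A(t))=1$, while you divide by the gcd — a cosmetic difference). Your explicit injectivity check via the coordinates $t^2+1$ and $t^2-1$ is a small point the paper leaves implicit, but it does not change the argument.
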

\begin{proof}
For a given $t\in\Z\setminus\{-1, 0, 1\}$ we take $A(t)=(A_{0}(t), \ldots, A_{4}(t))$ and observe that $\gcd(A_{0}(t),\ldots, A_{4}(t))=1$ for even values of $t$. If $t$ is odd, we have that $\gcd(A_{0}(t),\ldots, A_{4}(t))=2$. We clearly have $\op{max}\{|A_{i}(t)|:\;i=0, 1, 2, 3, 4\}=64t^2(t^8-1)$. Thus the number of $A$'s such that $\op{max}\{|A_{i}|:\;i=0, 1, 2, 3, 4\}\leq N$ and $\cal{V}_{A}$ has infinitely many non-trivial integer points is not smaller than the number of solutions of the inequality $64t^2(t^8-1)\leq N$, i.e., we have at least $t\gg N^{1/10}$ positive integer values satisfying required properties. We thus have $\cal{P}(N)\gg N^{1/10}$.
\end{proof}

\begin{rem}
{\rm Let us observe that the solutions of $F_{t}(X_{0},\ldots, X_{4})=0$ given by $X_{i}=s_{i}(u, v), i=0,\ldots, 4$, we already constructed, satisfy two additional equations, i.e.,
$$
L(X_{0},\ldots, X_{4})=Q(X_{0},\ldots, X_{4})=0,
$$
where
$$
L=4X_0+X_1-X_{2}+X_3-X_{4},\quad Q=X_1^2+X_2^2-X_3^2-X_4^2.
$$
It is interesting to note that the forms $L, Q$ do not depend on $t$. Thus, for given $A\in\Z^{5}$, it is natural to define the variety
$$
\cal{C}_{A}:\;F_{A}(X_{0},\ldots, X_{4})=L(X_{0},\ldots, X_{4})=Q(X_{0},\ldots, X_{4})=0.
$$
One can check that for a generic choice of the vector $A\in\Z^{5}$, the variety $\cal{C}_{A}$ is a curve of genus $>1$. However, as we have seen the genus of $\cal{C}_{A}$ can drop to 0. From Faltings theorem (see \cite{Fal}) we know that the necessary condition for $\cal{C}_{A}$ to have infinitely many rational points is when the genus is $\leq 1$.
We thus formulate the following

\begin{prob}\label{genus01}
Characterize all 5-tuples of integers $A=(A_{0},\ldots, A_{4})$ such that the curve $\cal{C}_{A}$ has genus $\leq 1$.
\end{prob}
}
\end{rem}

\section{An application}\label{sec4}

In this section we present a curious application of our construction of solutions of the equation defining $\cal{V}_{A(t)}$. More precisely, we construct infinitely many diagonal quintic forms $G_{t}, t\in\Z\setminus\{-1, 0, 1\}$, in four variables, and such that the equation $G_{t}(X_{0}, X_{1}, X_{2}, X_{3})=R(t)$ has infinitely many integer solutions for suitable chosen integer $R(t)$.

Before we go on, let us recall the following well know property of Pell type equation $V^2-dU^2=a$, where $a\in\Z\setminus\{0\}$ and $d\in\N$ is not a square. If $(u, v)$, where $uv\neq 0$, is an integer solution of this equation, and $(X_{0}, Y_{0})$ is the fundamental solution of the Pell equation $Y^2-dX^2=1$, then for each $n\in\N_{+}$, the pair $(U_{n}, V_{n})$ defined by the equality
$$
V_{n}+U_{n}\sqrt{d}=(v+u\sqrt{d})(Y_{0}+X_{0}\sqrt{d})^{n}
$$
is a solution of the equation $V^2-dU^2=a$. In particular, the existence of one solution $(u, v), uv\neq 0$, of the equation $V^2-dU^2=a$ implies the existence of infinitely many solutions.

\bigskip
\begin{lem}\label{lem1}
The equation
\begin{equation}\label{impeq}
s_{3}(u,v)=s_{3}(1,1)=-t^2-1,
\end{equation}
where $s_{3}(u, v)$ is given in (\ref{svalues}), has infinitely many solutions $u, v$ in the ring $\Z\left[\frac{1}{2},t\right]$.
\end{lem}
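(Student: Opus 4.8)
The plan is to reduce equation (\ref{impeq}) to a Pell-type equation and then apply the generation principle recalled just before the lemma. Writing $s_3(u,v) = tu^2 - (t^2+1)uv - tv^2$ and multiplying $s_3(u,v) = -t^2-1$ by $4t$, I complete the square to obtain
$$
\left(2tu - (t^2+1)v\right)^2 - (t^4+6t^2+1)\,v^2 = -4t(t^2+1).
$$
Thus, setting $V = 2tu-(t^2+1)v$, $U = v$, $d = t^4+6t^2+1$ and $a = -4t(t^2+1)$, the equation becomes the Pell-type equation $V^2 - dU^2 = a$. The obvious solution $(u,v)=(1,1)$ corresponds to $(V,U) = (-(t-1)^2,\, 1)$, which I verify by direct substitution.

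Next I need a nontrivial solution of the associated unit equation $Y^2 - dX^2 = 1$, playing the role of the fundamental Pell solution, but now in the coefficient ring $\Z[\tfrac12,t]$. The key observation is $d = (t^2+3)^2 - 8$, so the element $\eta = (t^2+3)+\sqrt d$ has norm $\eta\bar\eta = 8$. Since $8$ is a unit in $\Z[\tfrac12]$, the element $\eta^2/8$ has norm $1$; expanding gives
$$
\frac{\eta^2}{8} = \frac{t^4+6t^2+5}{4} + \frac{t^2+3}{4}\sqrt d,
$$
so that $(Y_0, X_0) = \big(\tfrac{t^4+6t^2+5}{4},\, \tfrac{t^2+3}{4}\big)$ solves $Y_0^2 - dX_0^2 = 1$ in $\Z[\tfrac12,t]$. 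Here the ring $\Z[\tfrac12,t]$, rather than $\Z[t]$, is forced precisely by the denominator $4=2^2$. Following the recalled construction I define $(V_n, U_n)$ by $V_n + U_n\sqrt d = \big(-(t-1)^2 + \sqrt d\big)\big(Y_0 + X_0\sqrt d\big)^n$; since $d \in \Z[\tfrac12,t]$, an easy induction shows each $V_n, U_n$ lies in $\Z[\tfrac12,t]$ and solves $V^2 - dU^2 = a$.

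Finally I recover $(u,v)$: from $U=v$ and $V = 2tu-(t^2+1)v$ I set $v_n = U_n$ and $u_n = \big(V_n + (t^2+1)U_n\big)/(2t)$. The one genuine obstacle is to show $u_n \in \Z[\tfrac12,t]$, i.e. that $t$ divides the numerator. I handle this by reducing modulo $t$: the map $\Z[\tfrac12,t][\sqrt d] \to \Z[\tfrac12]$ sending $t\mapsto 0$ and $\sqrt d \mapsto 1$ is well defined since $d\equiv 1\pmod t$, and it sends the base element $-(t-1)^2 + \sqrt d$ to $-1 + 1 = 0$. Hence it sends $V_n + U_n\sqrt d$ to $0$, which says exactly $(V_n + U_n)\big|_{t=0} = 0$, that is $t \mid V_n + U_n$. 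Writing $V_n + (t^2+1)U_n = (V_n + U_n) + t^2U_n$ then exhibits the numerator as divisible by $t$, so $u_n \in \Z[\tfrac12,t]$ after the harmless division by $2$. Distinctness of the solutions for different $n$ follows because $Y_0 + X_0\sqrt d$ is a non-constant unit, so the $t$-degrees of $V_n$ and $U_n$ grow without bound. This produces infinitely many solutions and completes the proof.
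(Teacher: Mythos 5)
Your proof is correct and follows essentially the same route as the paper: you complete the square (isolating $u$ rather than the paper's $v$) to arrive at the identical Pell-type equation $V^2-(t^4+6t^2+1)U^2=\pm 4t(t^2+1)$, use the same fundamental unit $\frac{1}{4}(t^2+1)(t^2+5)+\frac{1}{4}(t^2+3)\sqrt{t^4+6t^2+1}$, and generate infinitely many solutions by multiplying a base solution coming from $(u,v)=(1,1)$ by its powers. Your only departures are expository strengthenings of steps the paper leaves implicit: deriving the fundamental solution from $t^4+6t^2+1=(t^2+3)^2-8$, and replacing the paper's ``simple induction'' for the divisibility of the numerator by $2t$ with a clean specialization argument $t\mapsto 0$, $\sqrt{t^4+6t^2+1}\mapsto 1$.
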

\begin{proof}
 Let us note the equation (\ref{impeq}) can be written in the following form
\begin{equation}\label{impeq2}
V^2-(t^4+6t^2+1)U^2=4t(t^2+1),
\end{equation}
where $V=2tv+(1+t^2)u, U=u$. Let us write $P(t)=t^4+6t^2+1$. Thus, we consider Pell type equation and look for solutions $U, V$ satisfying congruence condition $V\equiv (1+t^2)U\pmod*{2t}$.

Let us note that (\ref{impeq2}) has a non trivial solution $(V, U)=((t+1)^2, 1)$. Moreover, the  Pell equation
$$
Y^2-P(t)X^2=1
$$
has the fundamental solution
$$
Y_{0}=Y_{0}(t)=\frac{1}{4}(t^2+1)(t^2+5),\quad X_{0}=X_{0}(t)=\frac{1}{4}(t^2+3).
$$
As a consequence, we get that for each $n\in\N$ the pair $(V_{n}, U_{n})$ defined by
$$
V_{n}+U_{n}\sqrt{P(t)}=\left((t+1)^2+\sqrt{P(t)}\right)^{n}\left(\frac{1}{4}(t^2+1)(t^2+5)+\frac{1}{4}(t^2+3)\sqrt{P(t)}\right),
$$
is a solution of (\ref{impeq2}). Next, to obtain solutions of (\ref{impeq}) we need to know that $(V_{n}\equiv (1+t^2)U_{n})/2t\in\Z\left[\frac{1}{2},t\right]$. However, a simple induction on $n$ confirms that this is the case. Finally, for each $n\in\N$ the pair $(v_{n}, u_{n})$, where
\begin{equation}\label{valuesuv}
u_{n}=U_{n}, \quad v_{n}=\frac{1}{2t}(V_{n}-(1+t^2)U_{n}),
\end{equation}
is a solution of (\ref{impeq}). To see an example we take $n=1$ and get
$$
u_{1}=\frac{1}{2}(t^4+t^3+5 t^2+3 t+4), \quad v_{1}=\frac{1}{2}(t^3+t^2+3 t+1).
$$
For $n=2$ we get
\begin{align*}
u_{2}&=\frac{1}{4} \left(t^8+t^7+11 t^6+9 t^5+39 t^4+23 t^3+49 t^2+15 t+16\right), \\
v_{2}&=\frac{1}{4} \left(t^7+t^6+9 t^5+7 t^4+23 t^3+11t^2+15 t+1\right).
\end{align*}
\end{proof}

\begin{rem}
{\rm Using standard methods it is not difficult to prove that the following continued fraction expansion
$$
\sqrt{(2 t+1)^4+6 (2 t+1)^2+1}=[4t^2+4t+3; \overline{1, t^2+t-1, 1, 2(4t^2+4t+3)}]
$$
holds. Thus, if we replace $t$ by $2t+1$ in (\ref{impeq}), then the resulting equation has infinitely many solutions in the ring $\Z[t]$.
}
\end{rem}

We are ready to prove the following.

\begin{thm}\label{app1}
Let $G_{t}(X_{0}, X_{1}, X_{2}, X_{3})=(t^2+1)(X_0^5-X_1^5)+(1-t^2)X_2^5+64t^2(1-t^8)X_3^5$. For each $t\equiv 1\pmod*{2}$, the Diophantine equation
$$
G_{t}(X_{0}, X_{1}, X_{2}, X_{3})=(t^2-1)(t^2+1)^5
$$
has infinitely many solutions in integers.
\end{thm}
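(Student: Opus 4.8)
The plan is to exploit the fact that the parametrization (\ref{svalues}) turns $F_{A(t)}$ into an identity in $u,v$, combined with Lemma \ref{lem1}, which freezes one of the five coordinates along an infinite family of specializations. First I would recall that, by the construction underlying Theorem \ref{family}, the quadratic forms $s_{0},\ldots,s_{4}$ satisfy the polynomial identity
$$
64t^2(1-t^8)\,s_{0}^5+(t^2+1)\,s_{1}^5-(t^2+1)\,s_{2}^5+(t^2-1)\,s_{3}^5-(t^2-1)\,s_{4}^5=0
$$
in $\Q[u,v]$, where I have inserted the explicit values $A_{0}(t)=64t^2(1-t^8)$, $A_{1}(t)=-A_{2}(t)=t^2+1$, $A_{3}(t)=-A_{4}(t)=t^2-1$. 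The decisive point is that this identity holds for \emph{every} value of $u,v$, so I may specialize $(u,v)$ to the solutions $(u_{n},v_{n})$ produced in Lemma \ref{lem1}. For those specializations one has $s_{3}(u_{n},v_{n})=s_{3}(1,1)=-(t^2+1)$, a quantity independent of $n$, so the fourth summand collapses to the constant $(t^2-1)s_{3}^5=-(t^2-1)(t^2+1)^5$.

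Next I would move this constant to the right-hand side and regroup the four surviving terms. Using $(t^2+1)s_{1}^5-(t^2+1)s_{2}^5=(t^2+1)(s_{1}^5-s_{2}^5)$ and $-(t^2-1)s_{4}^5=(1-t^2)s_{4}^5$, the identity becomes
$$
(t^2+1)(s_{1}^5-s_{2}^5)+(1-t^2)s_{4}^5+64t^2(1-t^8)s_{0}^5=(t^2-1)(t^2+1)^5,
$$
which is precisely $G_{t}(X_{0},X_{1},X_{2},X_{3})=(t^2-1)(t^2+1)^5$ under the assignment
$$
(X_{0},X_{1},X_{2},X_{3})=\bigl(s_{1}(u_{n},v_{n}),\,s_{2}(u_{n},v_{n}),\,s_{4}(u_{n},v_{n}),\,u_{n}v_{n}\bigr).
$$
Thus every solution furnished by Lemma \ref{lem1} yields, by direct substitution, a solution of the target equation; no new computation beyond specializing the already-established identity is needed.

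It remains to guarantee that these produce infinitely many \emph{integer} solutions. The forms $s_{1},s_{2},s_{4}$ have integer coefficients (in $t$) and $s_{0}=u_{n}v_{n}$ is a product, so the only delicate point is the integrality of $u_{n},v_{n}$ themselves. By Lemma \ref{lem1} these lie in $\Z\left[\tfrac12,t\right]$, and I expect the hypothesis $t\equiv1\pmod*{2}$ to enter exactly here: a parity check, run by induction on $n$ in parallel with the congruence already verified in Lemma \ref{lem1}, shows that the factor $\tfrac12$ is cancelled for odd $t$ (as is already visible in the examples $u_{1},v_{1}$, and as the continued-fraction remark after Lemma \ref{lem1} confirms via the substitution $t\mapsto 2t+1$). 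Finally, since the Pell solutions $(U_{n},V_{n})$ grow without bound, the coordinates $u_{n}v_{n}$ are pairwise distinct, so the resulting integer points are genuinely infinite in number. I anticipate that the main obstacle is the clean bookkeeping of this parity/induction argument rather than anything conceptual, because the algebraic core is merely the specialization of a single identity at a coordinate held constant by Lemma \ref{lem1}.
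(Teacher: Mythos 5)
Your proposal is correct and takes essentially the same route as the paper: the paper's proof likewise specializes the identity $\sum_{i=0}^{4}A_{i}(t)s_{i}(u,v)^5=0$ at the solutions $(u_{n},v_{n})$ of $s_{3}(u,v)=-(t^2+1)$ from Lemma \ref{lem1}, with exactly your assignment $(X_{0},X_{1},X_{2},X_{3})=(s_{1},s_{2},s_{4},s_{0})$ evaluated at $(u_{n},v_{n})$. Your explicit remarks on integrality for odd $t$ and on the distinctness of the resulting points merely spell out details the paper leaves implicit, not a different method.
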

\begin{proof}
To get the proof it is enough to use the polynomials $u_{n}, v_{n}$ given by (\ref{valuesuv}). Indeed, we know that $s_{3}(u_{n}, v_{n})=-t^2-1$. Thus, for each $n\in\N_{+}$ we can take
$$
X_{0, n}=s_{1}(u_{n}, v_{n}), X_{1, n}=s_{2}(u_{n}, v_{n}), X_{2, n}=s_{4}(u_{n}, v_{n}), X_{3, n}=s_{0}(u_{n}, v_{n})
$$
and observe that $G_{t}(X_{0, n}, X_{1, n}, X_{2, n}, X_{3, n})=(t^2-1)(t^2+1)^5$.
\end{proof}

For $B=(B_{0}, B_{1}, B_{2}, B_{3})\in\Z^{4}$ let us put
$$
G_{B}(X_{0}, X_{1}, X_{2}, X_{4})=B_{0}X_{0}^5+B_{1}X_{1}^5+B_{2}X_{2}^5+B_{3}X_{3}^5
$$
and define the counting function
$$
C_{b,B}(N)=\#\{(X_{0}, X_{1}, X_{2}, X_{3})\in\N:\; |X_{i}|\leq N \;\mbox{and}\; G_{B}(X_{0}, X_{1}, X_{2}, X_{3})=b\}.
$$
From Theorem \ref{app1} we immediately get the following.
\begin{cor}
There are infinitely many $B\in\Z^{4}$ satisfying $\gcd(B)=1$ and such that there is an integer $b$ depending on $B$, such that
$$
C_{b, B}(N)\gg \log N.
$$
\end{cor}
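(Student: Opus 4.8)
The plan is to harvest the corollary directly from Theorem~\ref{app1} by specialising the parameter and then tracking heights. For each odd $t$ with $|t|\ge 3$, Theorem~\ref{app1} produces an infinite family of integer solutions of $G_t(X_0,X_1,X_2,X_3)=(t^2-1)(t^2+1)^5$, namely $(X_{0,n},X_{1,n},X_{2,n},X_{3,n})=(s_1(u_n,v_n),s_2(u_n,v_n),s_4(u_n,v_n),s_0(u_n,v_n))$ for $n\in\N_{+}$, with $(u_n,v_n)$ as in Lemma~\ref{lem1}. The coefficient vector of $G_t$ is $(t^2+1,-(t^2+1),1-t^2,64t^2(1-t^8))$, all of whose entries are even; since $\gcd(t^2+1,t^2-1)=2$ for odd $t$, their greatest common divisor is exactly $2$. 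First I would therefore set $B=B(t)=\tfrac12(t^2+1,-(t^2+1),1-t^2,64t^2(1-t^8))$, which is primitive, and $b=b(t)=\tfrac12(t^2-1)(t^2+1)^5$, which lies in $\Z$ because $8\cdot 2^5\mid(t^2-1)(t^2+1)^5$ for odd $t$. Dividing the identity of Theorem~\ref{app1} by $2$ then gives $G_B(X_{0,n},\dots,X_{3,n})=b$ with $\gcd(B)=1$, so every solution above is a solution of $G_B=b$.

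Next I would upgrade ``infinitely many solutions'' to the quantitative bound. The solutions $(u_n,v_n)$ arise, via Lemma~\ref{lem1}, from the Pell-type equation~(\ref{impeq2}), whose modulus $P(t)=t^4+6t^2+1$ is a non-square for $|t|\ge 2$ (one checks $(t^2+2)^2<P(t)<(t^2+3)^2$). Writing $\eta=\eta(t)=Y_0(t)+X_0(t)\sqrt{P(t)}>1$ for the fundamental unit, the solutions satisfy $V_n,U_n\asymp\eta^{\,n}$, whence $u_n,v_n\asymp\eta^{\,n}$ (the constant for $v_n$ is positive because $\sqrt{P(t)}>1+t^2$). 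Since each $s_i$ is a binary quadratic form in $u,v$, the coordinates satisfy $X_{i,n}\asymp\eta^{\,2n}$, and in particular $X_{3,n}=u_nv_n$ is strictly increasing, so the points are pairwise distinct. Consequently the number of indices $n$ with $\max_i|X_{i,n}|\le N$ is at least $\tfrac{\log N}{2\log\eta(t)}+O(1)$, giving $C_{b,B}(N)\gg\log N$ with implied constant depending on $t$. Finally, as $t$ ranges over the odd integers $\ge 3$ the first coordinate $\tfrac12(t^2+1)$ of $B(t)$ is strictly increasing, so the primitive vectors $B(t)$ are pairwise distinct, producing the required infinitude of $B$.

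The hard part will be the bookkeeping that turns the height count into a genuine statement rather than a plausible one, and here two points deserve care. First, the counting function ranges over $(X_0,X_1,X_2,X_3)\in\N$, so positivity of the coordinates must be checked: choosing the branch of Pell solutions with $u_n,v_n>0$, the dominant solution forces $v_n/u_n\to(\sqrt{P(t)}-t^2-1)/(2t)\in(\tfrac{1}{2t},\tfrac1t)$, and at this limiting ratio a short estimate of the forms $s_0,s_1,s_2,s_4$ (using the discriminant $t^4-6t^2+1$ of $s_2$ and the inequality $\sqrt{P(t)}+\sqrt{t^4-6t^2+1}<2t^2$) shows all four are eventually positive, so discarding finitely many $n$ leaves positive solutions only. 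Second, one must confirm that no coordinate is held small by cancellation, i.e. that the $s_i$ genuinely grow like $\eta^{\,2n}$; this follows from the same limiting-ratio analysis, since none of the forms vanishes along that ray. Both verifications are routine once the asymptotics of $\eta^{\,n}$ and of $v_n/u_n$ are pinned down, and neither disturbs the logarithmic order of the bound.
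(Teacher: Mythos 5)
Your proposal is correct and follows the same route as the paper---specialising Theorem \ref{app1} and taking $b$ to be (essentially) $(t^2-1)(t^2+1)^5$---but you handle two points more carefully than the paper's one-line proof, and one of them is a genuine repair. The paper simply takes $B=(t^2+1,-t^2-1,1-t^2,64t^2(1-t^8))$ for $t\in\Z\setminus\{-1,0,1\}$; however, Theorem \ref{app1} is proved only for odd $t$, and for odd $t$ all four entries of this $B$ are even, with $\gcd(B)=2$ exactly (since $t^2+1\equiv 2\pmod*{4}$), so the primitivity requirement fails precisely on the parameters the theorem covers, while for even $t$ (where $\gcd(B)=1$ would hold) the infinitude of integer solutions is not established, because e.g.\ $v_{1}=\tfrac{1}{2}(t^3+t^2+3t+1)\notin\Z$. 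Your halving, $B(t)=\tfrac{1}{2}\left(t^2+1,-(t^2+1),1-t^2,64t^2(1-t^8)\right)$ and $b(t)=\tfrac{1}{2}(t^2-1)(t^2+1)^5$, fixes this cleanly: the halved vector is primitive since $\gcd\left((t^2+1)/2,(t^2-1)/2\right)=1$, and $b(t)\in\Z$ trivially because $t^2-1$ is even. Second, the paper leaves the quantitative claim $C_{b,B}(N)\gg\log N$ and the positivity of the coordinates (the counting function ranges over $\N$) entirely implicit; your Pell-growth analysis $u_n,v_n\asymp\eta^n$, the limiting ratio $v_n/u_n\to(\sqrt{P(t)}-t^2-1)/(2t)\in(1/(2t),1/t)$, and the verification that $s_0,s_1,s_2,s_4$ are positive and nonvanishing along that ray supply exactly what is needed (note that $s_3$ \emph{does} vanish along the limiting ray, which is precisely why $s_3(u_n,v_n)$ stays bounded, but $s_3$ is not one of the four coordinates, so this causes no harm). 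One small sharpening: since $V_n^2-P(t)U_n^2=4t(t^2+1)>0$ forces $V_n>\sqrt{P(t)}\,U_n>(t^2+1)U_n$, one in fact gets $u_n,v_n>0$ for \emph{all} $n$ when $t>0$, so your ``discarding finitely many $n$'' can be avoided altogether.
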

\begin{proof}
For $t\in\Z\setminus\{-1, 0, 1\}$ take $B=(t^2+1, -t^2-1, 1-t^2, 64t^2(1-t^8))$ and $b=(t^2-1)(t^2+1)^5$.
\end{proof}

\begin{exam}
{\rm
We take $t=3$. From Theorem \ref{app1}, we know that the equation
\begin{equation}\label{exameq}
5X_0^5-5X_1^5-4X_2^5-1889280X_3^5=2^7 5^5
\end{equation}
has infinitely many solutions in positive integers. Using the formulas for $X_{i, n}, i=0, 1, 2, 3$, from the proof of Theorem \ref{app1}, one can compute small solutions of our equation. The solutions corresponding to $n=2, 3$ are
$$
(37526, 6982, 38170, 1909), \quad (183773534, 34226638, 186933610, 9346681).
$$

Note that the obtained family of integer solutions does not cover all integer solutions of (\ref{exameq}). Indeed, due to reducibility of the quintic form $5(X_{0}^5-X_{1}^5)$, one can find all integer solutions of (\ref{exameq}) satisfying the condition $\op{max}\{|X_{2}|, |X_{3}|\}\leq 10^3.$ In this range we have the solutions
\begin{align*}
&(-166, -38, -170, -8), (38, 166, -170, -8), (2, -14, 10, 1),\; (14, -2, 10, 1),\\
&(-10,-90, 90, 5),\;(90, 10, 90, 5), (-94, -542, 550, 28),\; (542, 94, 550, 28).
\end{align*}

}
\end{exam}

\begin{rem}
{\rm According to our best knowledge, in the above result we have the first explicit example of a diagonal quintic form $F$ in four variables and such that there is an integer $b$ for which the Diophantine equation $F(X_{0}, X_{1}, X_{2}, X_{3})=b$ has infinitely many integer solutions satisfying the condition $X_{0}X_{1}X_{2}X_{3}\neq 0$. }
\end{rem}

\section*{Appendix}

Coefficients $C_{i}$ (up to constant factor) in the expansion of the polynomial
$$
\sum_{i=0}^{4}A_{i}s_{i}(u,v)^{5}=\sum_{j=0}^{10}C_{j}u^{10-j}v^{j}
$$
are given by:
\begin{align*}
C_{0}&=A_1+A_2+A_3+1, \\
C_{1}&=-a A_1+a A_2+A_3 b+d, \\
C_{2}&=(2 a^2+1) A_1+2 a^2 A_2+(2 b^2+c)A_{3}+2d^2+e,\\
C_{3}&=-a(a^2+2)A_1+a(a^2+2)A_2+b(b^2+c)A_3+d^3+2de, \\
C_{4}&=(a^4+6a^2+2)(A_{1}+A_{2})+(b^4+6 b^2 c+2 c^2)A_{3}+d^4+6d^2e+2e^2, \\
C_{5}&=A_{0}+a(a^4+20a^2+30)(A_{2}-A_{1})\\
     &\quad +b(b^4+20 b^2 c+30c^2)A_{3}+d(d^4+20d^2e+30e^2),\\
C_{6}&=(a^4+6 a^2+2)(A_{1}+A_{2})+c(b^4+6 b^2 c+2 c^2)A_{3}+e(d^4+6d^2e+2e^2), \\
C_{7}&=-a(a^2+2)A_{1}+a(a^2+2)A_{2}+bc^2(b^2+2c)A_{3}+de^2(d^2+2e),\\
C_{8}&=(2a^2+1)A_1+(2a^2+1)A_2+c^3(2b^2+c)A_3+2 d^2 e^3+e^4, \\
C_{9}&=-a A_1+a A_2+A_3 b c^4+d e^4, \\
C_{10}&=c^{5}A_3+A_1+A_2+e^5. \\
\end{align*}
The computation of the Gr\"{o}bner basis $\op{Gb}(I)$ of the ideal $I=<C_{0},\ldots, C_{4}, C_{6}, \ldots, C_{10}>$ was performed with the procedure
\begin{center}
{\tt GroebnerBasis$[I, \{A_{1}, A_{2}, A_{3}, a, b, c, d, e\}]$}.
\end{center}
The computation took around 17 minutes.

The computation of the intersection $\op{Gb}(I)\cap \Q[A_{3}, e]$ was performed with the procedure
\begin{center}
{\tt GroebnerBasis$[I, \{A_{1}, A_{2}, A_{3}, a, b, c, d, e\},\{A_{1}, A_{2}, a, b, c, d\}]$}.
\end{center}
The computation took less than 7 minutes.

The computation of the Gr\"{o}bner basis $\op{Gb}(I')$ of the ideal $I'=<A_{3}+1, I>$ was performed with the procedure
\begin{center}
{\tt GroebnerBasis$[I', \{A_{1}, A_{2}, A_{3}, a, b, c, d, e\}]$}.
\end{center}
The computation took less than a second. To solve the resulting equations from the set $\op{Gb}(I')=\{f_{1},\ldots, f_{10}\}$, we used the procedure
\begin{center}
{\tt Solve$[\{f_{1},\ldots, f_{10}\}=\{0,\ldots, 0\},\{A_{1}, A_{2}, A_{3}, a, b, c, d, e\}]$}.
\end{center}

\bigskip

\noindent {\bf Acknowledgements.} The author is grateful to two anonymous referees for a careful reading of the paper and many suggestions which improved the presentation.

\bigskip

\noindent Maciej Ulas, Jagiellonian University, Institute of Mathematics,
{\L}ojasiewicza 6, 30-348 Krak\'ow, Poland; email:\;{\tt maciej.ulas@uj.edu.pl}

 \end{document}